\theoremstyle{plain} \newtheorem{definition}{Definition}[section]
\theoremstyle{plain} \newtheorem{Ex}{Example}[section]
\theoremstyle{plain} \newtheorem{lemma}{Lemma}[section]
\theoremstyle{plain} \newtheorem{thm}{Theorem}[section]
\theoremstyle{plain} \newtheorem{rem}{Remark}[section]
\theoremstyle{plain} \newtheorem{prop}{Proposition}[section]
\theoremstyle{plain} \newtheorem{cor}{Corollary}[section]
\theoremstyle{plain} 
\theoremstyle{plain}
\title{\textbf{Perturbation of farthest points in weakly compact sets}}
\author{J. M. AUGE}
\date{}
\begin{document}

\maketitle

\begin{abstract} If $f$ is a real valued weakly lower semi-continous function on a Banach space $X$ and $C$ a weakly compact subset of $X$, we show that the set of $x \in X$ such that $z \mapsto \|x-z\|-f(z)$ attains its supremum on $C$ is dense in $X$. We also construct a counter example showing that the set of $x \in X$ such that $z \mapsto \|x-z\|+\|z\|$ attains its supremum on $C$ is not always dense in $X$.
\end{abstract}

\begin{section}{Introduction}

Throughout this paper, $X$ denotes a real Banach space, $B_X$ its closed unit ball, $X^*$ the Banach space of all continuous linear functionals on $X$, $C$ a bounded set of $X$ and $f:X \rightarrow \mathbb{R}$ a function which is bounded below on  $C$. We study the following sets 
$$ D(C,f)=\{x \in X; \exists z \in C, r(x)=\|x-z\|-f(z)\}, $$
where by definition $r$ is the map from $X$ to $\mathbb{R}$ given by the formula 
$$ r(x)=\sup\{\|x-z\|-f(z), z \in C\}. $$
The map $r$ depends on $f$ and should be written $r_f$, but since there will be no ambiguity, we simply write $r=r_f$.
We remark that $r$ is 1-Lipschitz and convex as a supremum of such functions and that by replacing $f$ by $f+a$ where $a$ is a constant, we can suppose that $f \geqslant 0$. When $f=0$, the set $D(C,0)$ is geometrically the set of points of $X$ which admit a farthest point in the set $C$ and $r(x)$ is the farthest distance from $x$ to $C$, i. e. $r(x)$ is the smallest radius of the balls centered in $x$ that contain $C$ . Here, the function $f$ is a perturbation, we will show that under suitable hypothesis of regularity on $f$, some results known on the  set $D(C,0)$ can be generalized. To be more precise, we will be interested in the generic existence of points in $D(C,f)$. For farthest points, the problem was first studied by Edelstein in \cite{2} for uniformly convex spaces, assuming the set $C$ is bounded and norm closed and then generalized by Asplund in \cite{1} for reflexive locally uniformly convex spaces. Then Lau in \cite{4} showed that when $C$ is weakly compact (w
 ithout any geometric hypothesis on $X$), the set of farthest points is dense and he also showed that this result implies Asplund's theorem. Here we will give a generalization of Lau's theorem (see also the paper \cite{5} which deals with euclidean spaces, and \cite{3} for the case of $p$-normed spaces): when $f$ is weakly lower semi-continuous and $C$ weakly compact, the set $D(C,f)$ contains a $G_{\delta}$ dense subset of $X$. We then take some particular $f$ to see what happens when we study the set of points $x \in X$ such that $z \mapsto \|z-x\|-\|z\|$ (resp. $z \mapsto \|z-x\|+\|z\|$) attain their supremum on $C$.

\footnotetext[1]{\textit{Mathematics Subject Classification.} Primary 41A65.}
\footnotetext[2]{\textit{Key words and phrases}: normed space, weakly compact set, farthest points.}

\end{section}

\section{Density of the set D(C,f)}

We start this section by defining the sub-differential of the map $r$ (this definition stays unchanged for any convex map).

\begin{definition} The sub-differential of $r$ is the set
$$ \partial r(x)=\{x^* \in X^*; \forall y \in X, \langle x^*,y-x \rangle \leqslant r(y)-r(x)\}. $$
\end{definition}

Since $r$ is 1-Lipschitz, $ \partial r(x)$ is contained in the closed unit ball of the dual. We can now state our positive theorem which follows the ideas of Lau's proof.

\begin{thm}\label{thm:main}
Suppose that $C$ is a weakly compact subset of $X$ and that $f$ is weakly lower semi-continuous for the weak topology on $X$, then the set $D(C,f)$ contains a  $G_{\delta}$ dense subset of $X$.
\end{thm}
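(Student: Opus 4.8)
The plan is to follow Lau's scheme, the guiding principle being that the only obstruction to attainment is the failure of weak upper semicontinuity of the norm, and that this obstruction can be defeated by a Baire category argument but not pointwise. After normalising $f \geqslant 0$, I record the soft facts already at hand: $r$ is convex and $1$-Lipschitz, so $\partial r(x) \subseteq B_{X^*}$, and for a maximising sequence $z_n \in C$ the weak compactness of $C$ yields a subnet with $z_n \rightharpoonup z_0 \in C$. The point to keep in mind is that weak lower semicontinuity of the norm and of $f$ only gives $\|x-z_0\|-f(z_0) \leqslant \liminf(\|x-z_n\|-f(z_n)) = r(x)$, which is the wrong inequality; so $z_0$ need not maximise, and this is exactly why attainment can only be expected on a residual set.

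First I would prove a radial (cone) lemma: if $z_0 \in C$ realises $r(x_0)=\|x_0-z_0\|-f(z_0)$ with $x_0 \neq z_0$, then $z_0$ still realises the supremum at every $x = z_0 + \lambda(x_0-z_0)$, $\lambda \geqslant 1$. Indeed, for any $z \in C$ the triangle inequality gives $\|x-z\|-f(z) \leqslant \|x-x_0\| + (\|x_0-z\|-f(z)) \leqslant (\lambda-1)\|x_0-z_0\| + r(x_0) = \|x-z_0\|-f(z_0)$. Thus $D(C,f)$ is stable under pushing points away from $C$ along rays; this both shows $D(C,f)$ is large once nonempty and supplies the rigidity used in the density step.

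Next I would build the $G_\delta$. For $\varepsilon>0$ put $M_\varepsilon(x)=\{z\in C:\|x-z\|-f(z)\geqslant r(x)-\varepsilon\}$, a nonempty subset of $C$ whose weak closures $\overline{M_\varepsilon(x)}^{\,w}$ form a nested family of nonempty weakly compact sets; any point of $\bigcap_{\varepsilon>0}\overline{M_\varepsilon(x)}^{\,w}$ is the natural candidate maximiser, and attainment at $x$ is equivalent to this intersection containing a genuine maximiser. I would then take $G_n$ to be the set of $x$ at which the defect $r(x)-\sup\{\|x-z\|-f(z):z\in\overline{M_{1/n}(x)}^{\,w}\}$ is strictly less than $1/n$, verify that each $G_n$ is open using the continuity of $r$ and the upper stability of the approximate‑maximiser sets, and check $\bigcap_n G_n \subseteq D(C,f)$ by letting the defect tend to $0$ and extracting, via weak compactness, an exact maximiser in the intersection.

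The hard part is the density of each $G_n$, i.e. overcoming the absence of weak upper semicontinuity. Here I would exploit that on the weakly compact set $C$ every $x^* \in X^*$ attains its supremum, so that the auxiliary problem of maximising $z \mapsto \langle x^*, x-z\rangle - f(z)$ is maximising a weakly \emph{upper} semicontinuous map (weakly continuous linear part plus the weakly upper semicontinuous $-f$) over $C$, and therefore attains its maximum at some $z^*$; combined with $\langle x^*, x-z^*\rangle \leqslant \|x-z^*\|$ this forces $z^*$ to be an \emph{exact} farthest point whenever $x^* \in \partial r(x)$ realises the conjugate identity $r^*(x^*)=\inf_{z\in C}(\langle x^*,z\rangle+f(z))$. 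To produce such configurations densely I would start from an approximate maximiser, push the base point slightly away from it along the cone lemma's ray (staying inside a prescribed ball) and re‑optimise, so as to drive the defect below $1/n$ inside any ball. The decisive obstacle I expect to fight is precisely the conversion of ``approximate'' into ``exact'': the discontinuity of the pairing when $z_n \rightharpoonup z_0$ weakly while the associated norming functionals $x_n^*$ converge only weak$^*$ inside the weak$^*$‑compact set $\partial r(x)$. It is this bilinear discontinuity that prevents pointwise attainment and that the countable intersection $\bigcap_n G_n$, together with weak compactness of $C$, is designed to circumvent.
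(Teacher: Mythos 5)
Your high-level strategy (Baire category, the subdifferential, and the observation that $z\mapsto\langle x^*,x-z\rangle-f(z)$ is weakly upper semi-continuous hence attains its supremum on $C$) is the right one and matches the paper's, but the concrete construction has two genuine gaps. First, the $G_\delta$ you propose is vacuous as written: since $M_{1/n}(x)$ contains points $z$ with $\|x-z\|-f(z)$ arbitrarily close to $r(x)$, one has $\sup\{\|x-z\|-f(z):z\in\overline{M_{1/n}(x)}^{\,w}\}=r(x)$, so your ``defect'' is $\leqslant 0$ and every $G_n$ equals $X$; and any repair based on extracting a maximiser from $\bigcap_\varepsilon\overline{M_\varepsilon(x)}^{\,w}$ runs into exactly the obstruction you yourself flagged, namely that a weak cluster point of approximate maximisers need not be a maximiser because $\|x-\cdot\|$ is not weakly upper semi-continuous. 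The residual set has to be defined through the subdifferential, as $G=\{x:\forall x^*\in\partial r(x),\ \sup_{z\in C}(\langle x^*,x-z\rangle-f(z))=r(x)\}$; on that set the weak u.s.c.\ of the linearised functional, together with $\|x^*\|\leqslant 1$, squeezes $r(x)\leqslant\|x-z_0\|-f(z_0)\leqslant r(x)$ and yields attainment. You gesture at this identity in your last paragraph but never attach it to a workable $G_\delta$.

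Second, the density step is not carried out. Your cone lemma assumes an \emph{exact} maximiser $z_0$ at $x_0$, which is precisely what is unavailable; what is actually needed is the approximate version: assuming a ball $\overline{B}(y_0,r)$ on which every point has some subgradient with defect $\geqslant 1/n$, pick an $\varepsilon$-approximate maximiser $z_0$ for $y_0$, set $x_0=y_0+\lambda(y_0-z_0)$ with $\lambda=r/(\alpha+\|y_0\|)$ and $\varepsilon=\lambda/(n(1+\lambda))$, and derive $r(y_0)-r(x_0)<\langle x^*,y_0-x_0\rangle$ for the bad subgradient $x^*$ at $x_0$, contradicting $x^*\in\partial r(x_0)$. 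This quantitative computation is the heart of the proof and is absent from your sketch. Likewise, the topological half (openness of the good sets, equivalently closedness of the bad sets $F_n$) does not follow from ``upper stability of the approximate-maximiser sets''; it requires extracting a weak$^*$ cluster point of the subgradients $x_k^*$ in $B_{X^*}$ and passing to the limit in the subgradient inequality.
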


In order to prove the theorem, we will use the following lemma:
\begin{lemma} 
Let $G=\{x \in X; \forall x^* \in \partial r(x), \sup\{\langle x^*,x-z \rangle -f(z), z \in C\}=r(x)\}$. Then $G$ is a $G_{\delta}$ dense subset of $X$.
\end{lemma}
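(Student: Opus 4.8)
The plan is to reduce the statement to two facts about a single ``defect'' function and then establish each. Throughout write $\phi(x,x^*)=\sup_{z\in C}(\langle x^*,x-z\rangle-f(z))=\langle x^*,x\rangle+\psi(x^*)$, where $\psi(x^*)=\sup_{z\in C}(-\langle x^*,z\rangle-f(z))$ is weak-$*$ lower semicontinuous, being a supremum of weak-$*$ continuous affine functions of $x^*$. Since every $x^*\in\partial r(x)$ lies in $B_{X^*}$, one has $\langle x^*,x-z\rangle\le\|x-z\|$ for all $z$, whence $\phi(x,x^*)\le r(x)$ always; thus the condition defining $G$ is exactly the reverse inequality. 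I therefore set
$$\Phi(x)=\sup_{x^*\in\partial r(x)}\bigl(r(x)-\phi(x,x^*)\bigr)\ge 0,$$
so that $G=\{x:\Phi(x)=0\}=\bigcap_{n\ge 1}\{x:\Phi(x)<1/n\}$. It then suffices to prove (i) that $\Phi$ is upper semicontinuous, which makes each $\{\Phi<1/n\}$ open and hence $G$ a $G_\delta$ set, and (ii) that each set $\{\Phi<\varepsilon\}$ is dense; Baire's theorem then yields that $G$ is dense.

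For (i) I would show that $\{\Phi\ge\alpha\}$ is norm-closed for every $\alpha$. The two standard ingredients are that $\partial r(x)$ is a weak-$*$ compact subset of $B_{X^*}$ (Banach--Alaoglu together with the fact that the defining inequality of the subdifferential passes to weak-$*$ limits), and that the graph of $\partial r$ is closed in the norm$\times$weak-$*$ topology: if $x_k\to x$ in norm, $x_k^*\in\partial r(x_k)$ and $x_k^*\to x^*$ weak-$*$, then letting $k\to\infty$ in $\langle x_k^*,y-x_k\rangle\le r(y)-r(x_k)$ gives $x^*\in\partial r(x)$, using continuity of $r$ and boundedness of the $x_k^*$. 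Because $x^*\mapsto r(x)-\phi(x,x^*)=r(x)-\langle x^*,x\rangle-\psi(x^*)$ is weak-$*$ upper semicontinuous on the weak-$*$ compact set $\partial r(x)$, the supremum defining $\Phi(x)$ is attained. Given $x_k\to x$ with $\Phi(x_k)\ge\alpha$, I pick maximizers $x_k^*\in\partial r(x_k)$, extract a weak-$*$ cluster point $x^*\in\partial r(x)$, and pass to the limit: $\langle x_k^*,x_k\rangle\to\langle x^*,x\rangle$ and $\liminf\psi(x_k^*)\ge\psi(x^*)$ give $\alpha\le r(x)-\phi(x,x^*)\le\Phi(x)$. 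The only nuisance is that weak-$*$ convergence must be handled along a subnet, since $B_{X^*}$ need not be weak-$*$ metrizable.

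The heart of the proof is (ii), and the key device is a one-parameter perturbation that pushes the base point directly away from a near-farthest point. Fix $x_0$ and $\delta,\varepsilon>0$. Let $R=\sup_{z\in C}\|x_0-z\|<\infty$ and choose $t\in(0,1)$ with $tR<\delta$. Only now choose $\eta>0$ small (with $2\eta+\eta/t<\varepsilon$) and a near-maximizer $z_0\in C$ with $\|x_0-z_0\|-f(z_0)>r(x_0)-\eta$, and set $x_t=x_0+t(x_0-z_0)$, so that $\|x_t-x_0\|=t\|x_0-z_0\|\le tR<\delta$. Writing $d=\|x_0-z_0\|$, testing $r$ at $z_0$ gives $r(x_t)\ge(1+t)d-f(z_0)>r(x_0)-\eta+td$, while $1$-Lipschitzness gives $r(x_t)\le r(x_0)+td$. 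The point of the construction is that for an arbitrary $x^*\in\partial r(x_t)$ the subgradient inequality evaluated at $y=x_0$ reads $-t\langle x^*,x_0-z_0\rangle=\langle x^*,x_0-x_t\rangle\le r(x_0)-r(x_t)<\eta-td$, hence $\langle x^*,x_0-z_0\rangle\ge d-\eta/t$. Consequently
$$\phi(x_t,x^*)\ge\langle x^*,x_t-z_0\rangle-f(z_0)=(1+t)\langle x^*,x_0-z_0\rangle-f(z_0)\ge(1+t)(d-\eta/t)-f(z_0),$$
and combining this with the upper bound on $r(x_t)$ yields $r(x_t)-\phi(x_t,x^*)<2\eta+\eta/t<\varepsilon$. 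Since $x^*\in\partial r(x_t)$ was arbitrary, $\Phi(x_t)\le 2\eta+\eta/t<\varepsilon$, so $x_t\in B(x_0,\delta)\cap\{\Phi<\varepsilon\}$ and density follows.

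I expect (ii) to be the main obstacle: the individual inequalities are elementary, but the whole argument hinges on choosing the perturbation direction $x_0-z_0$ and on the correct order of the quantifiers, namely fixing $t$ to control the displacement and only afterwards sending $\eta\to0$, whereas (i) is routine weak-$*$ compactness bookkeeping. I note in passing that weak compactness of $C$ and weak lower semicontinuity of $f$ are not actually needed for this lemma; they enter only afterward, to convert the equality $\phi(x,x^*)=r(x)$ at a point of $G$ into a genuine farthest point by attaining the supremum defining $\phi$.
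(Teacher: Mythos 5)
Your proposal is correct and follows essentially the same route as the paper: your decomposition $G=\bigcap_{n}\{\Phi<1/n\}$ is the complement form of the paper's $X\setminus G=\bigcup_{n}F_n$, your closedness of $\{\Phi\geqslant\alpha\}$ is the paper's weak-$*$ cluster-point argument for $F_n$, and your perturbation $x_t=x_0+t(x_0-z_0)$ combined with the subgradient inequality tested at $x_0$ is exactly the paper's construction $x_0=y_0+\lambda(y_0-z_0)$. The only cosmetic differences are that you prove density of the open sets directly rather than nowhere-density of the closed sets by contradiction, and that you package the estimates through the upper semicontinuous defect function $\Phi$.
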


\begin{proof} Write $X \diagdown G=\bigcup_{n=1}^{\infty} F_n$ with 
$$ F_n=\{x \in X; \exists x^* \in  \partial r(x), \sup\{\langle x^*,x-z \rangle -f(z), z \in C\} \leqslant r(x)-\frac{1}{n}\}.$$
By the Baire category theorem, it is enough to show that for fixed $n \geqslant 1$, $F_n$ is closed and nowhere dense.

$\linebreak$
- Let us first show that $F_n$ is a closed subset of $X$: let $(x_k)$ be a sequence in $F_n$ converging to $x \in X$. By the definition of $F_n$, there exists $x_k^* \in \partial r(x_k)$ such that
$$ \forall z \in C, \forall k \geqslant 1, \langle x_k^*,x_k-z \rangle-f(z) \leqslant r(x_k)-\frac{1}{n}. $$ 
Since $B_{X^*}$ is compact for $\sigma(X^*,X)$, we can choose $x^* \in \bigcap_{p} \overline{\{x_k^*, k \geqslant p\}}^{\sigma(X^*,X)}$, then we get for $z \in C$: 
\begin{eqnarray*}
|\langle x_k^*,x_k-z \rangle - \langle x^*,x-z \rangle| &\leqslant& | \langle x_k^*,x_k-z \rangle - \langle x_k^*,x-z \rangle| \\ &+&|\langle x_k^*,x-z \rangle -\langle x^*,x-z \rangle| \\
&\leqslant&  \|x_k^*\| \|x_k-x\|+ |\langle x_k^*,x-z \rangle - \langle x^*,x-z \rangle| \\
&\leqslant&  \|x_k-x\|+ |\langle x_k^*,x-z \rangle - \langle x^*,x-z \rangle|.
\end{eqnarray*}
Now for each fixed $z \in C$, there exists a subsequence $(x_{k_q}^*)$ such that $\langle x_{k_q}^*,x-z \rangle$ converges, and because $x^* \in \bigcap_{p} \overline{\{x_k^*, k \geqslant p\}}^{\sigma(X^*,X)}$, this limit is $\langle x^*,x-z \rangle$. By continuity of $r$, we obtain for each $z \in C$
$$ \langle x^*,x-z \rangle -f(z) \leqslant r(x)-\frac{1}{n},$$
and hence
$$ \sup\{\langle x^*,x-z \rangle -f(z),z \in C\} \leqslant r(x)-\frac{1}{n}.$$
To conclude that $x \in F_n$, it is enough to show that $x^* \in \partial r(x)$. Indeed, since $x_k^* \in \partial r(x_k)$, we have 
$$ \forall y \in X, \langle x_k^*,y-x_k \rangle \leqslant r(y)-r(x_k) $$
so by the same argument as before, we get at the limit: $x^* \in \partial r(x)$.

$\linebreak$
- Now, let us show that each $F_n$ is nowhere dense. Suppose it is false, then one can find $y_0 \in X$ and $r>0$ such that 
$\overline{B}(y_o,r) \subset F_n$. Let $\alpha=\sup\{\|z\|, z \in C\}$, $\lambda=\frac{r}{\alpha+\|y_0\|}$ and $\varepsilon=\frac{\lambda}{n(1+\lambda)}$. By the definition of $r(y_0)$, there exists $z_0 \in C$ such that 
$$ r(y_0)-\varepsilon<\|y_0-z_0\|-f(z_0) \leqslant r(y_0). $$
Finally, put $x_0=y_0+\lambda(y_0-z_0)$.  With the choice of $\lambda$, we have $x_0 \in \overline{B}(y_0,r) \subset F_n$. Now, we estimate $r(y_0)-r(x_0)$:
$$ r(y_0)-r(x_0)<\varepsilon+\|y_0-z_0\|-f(z_0)-r(x_0). $$
But, 
$$ x_0=y_0+\lambda(y_0-z_0) \Longrightarrow x_0-z_0=(1+\lambda)(y_0-z_0). $$

Hence
\begin{eqnarray*}
r(y_0)-r(x_0) &<&\varepsilon+\frac{1}{1+\lambda}\|x_0-z_0\|-f(z_0)-r(x_0) \\ 
&=&\varepsilon+\frac{1}{1+\lambda}(\|x_0-z_0\|-f(z_0))+(\frac{1}{1+\lambda}-1)f(z_0)-r(x_0) \\ 
&\leqslant& \varepsilon+\frac{1}{1+\lambda}r(x_0)-\frac{\lambda}{1+\lambda}f(z_0)-r(x_0) \\
&=&\varepsilon-\frac{\lambda}{1+\lambda}r(x_0)-\frac{\lambda}{1+\lambda}f(z_0). 
\end{eqnarray*}

Since $x_0 \in F_n$, there exists $x^* \in \partial r(x_0)$ such that   
$$ r(x_0) \geqslant \sup \{\langle x^*,x_0-z \rangle -f(z), z \in C\}+\frac{1}{n} \geqslant \langle x^*,x_0-z_0 \rangle-f(z_0)+\frac{1}{n}, $$
which gives, combined with the last estimation:
$$ r(y_0)-r(x_0)<\varepsilon-\frac{\lambda}{1+\lambda}\langle x^*,x_0-z_0 \rangle-\varepsilon
=\langle x^*,y_0-x_0 \rangle,$$
which contradicts $x^* \in \partial r(x_0)$.
\end{proof}

Here, we have just used the fact that $C$ is bounded. The hypothesis of weak compactness of $C$ and of weak lower semi-continuity of $f$ allow us to finish the proof of the theorem as follows 

\begin{proof} It is enough to see that $G \subset D(C,f)$. Consider $x \in G$ and $x^* \in \partial r(x)$, so 
$$ \sup\{\langle x^*,x-z \rangle-f(z), z \in C\}=r(x). $$
Since $f$ is weakly lower semi-continuous and that $z \mapsto \langle x^*,x-z \rangle$ is weakly continuous, then $z \mapsto \langle x^*,x-z \rangle-f(z)$ is weakly upper semi-continuous on the weakly compact set $C$, and attains its supremum at a point $z_0$. We get:
$$ r(x) \leqslant \|x^*\|\|x-z_0\|-f(z_0) \leqslant r(x) $$ because $\|x^*\| \leqslant 1$ and hence $r(x)=\|x-z_0\|-f(z_0)$.
\end{proof}

Since $z \mapsto \|z\|$ is weakly lower semi-continuous, we obtain

\begin{cor}\label{cor:main} If $C$ is weakly compact, the set of $x \in X$ such that $z \mapsto \|x-z\|-\|z\|$ attains its supremum on $C$ is dense in $X$.
\end{cor}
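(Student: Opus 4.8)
The plan is to apply Theorem \ref{thm:main} with the specific perturbation $f(z) = \|z\|$. First I would check that this $f$ satisfies the standing assumptions of the paper: it is a real-valued function on $X$ and it is bounded below on $C$ (indeed $f \geqslant 0$ everywhere), so it is a legitimate choice in the definition of $D(C,f)$. With this $f$, the set $D(C,f)$ is by definition exactly $\{x \in X : \exists z \in C, \ r(x) = \|x-z\| - \|z\|\}$, which is precisely the set of points $x$ for which $z \mapsto \|x-z\| - \|z\|$ attains its supremum on $C$. Thus, once the hypotheses of the theorem are verified, the desired density statement is immediate.

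The only hypothesis requiring verification is the weak lower semi-continuity of $f = \|\cdot\|$. I would establish this by expressing the norm as a supremum of weakly continuous affine functionals: by the Hahn--Banach theorem, $\|x\| = \sup\{\langle x^*, x \rangle : x^* \in B_{X^*}\}$ for every $x \in X$. Each map $x \mapsto \langle x^*, x \rangle$ is continuous for the weak topology $\sigma(X,X^*)$, hence weakly lower semi-continuous, and a pointwise supremum of weakly lower semi-continuous functions is again weakly lower semi-continuous. Therefore $f = \|\cdot\|$ is weakly lower semi-continuous on $X$, as required.

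Since $C$ is weakly compact by assumption and $f$ is weakly lower semi-continuous as just shown, Theorem \ref{thm:main} applies directly and yields that $D(C,f)$ contains a $G_{\delta}$ dense subset of $X$; in particular $D(C,f)$ is dense in $X$. As $D(C,f)$ coincides with the set described in the statement, the corollary follows. I do not expect any genuine obstacle here: all of the substance is carried by the theorem, and the corollary is merely its specialization to the perturbation $f = \|\cdot\|$, the only new ingredient being the elementary fact that the norm is weakly lower semi-continuous.
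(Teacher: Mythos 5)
Your proposal is correct and follows exactly the paper's route: the corollary is obtained by applying Theorem \ref{thm:main} with $f(z)=\|z\|$, the only point to check being that the norm is weakly lower semi-continuous, which the paper asserts without proof and which you justify by the standard duality formula $\|x\|=\sup\{\langle x^*,x\rangle : x^*\in B_{X^*}\}$. No gaps.
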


\section{Counter examples and remarks} 

It is natural to ask ourselves if we can drop the hypothesis of weak lower semi-continuity in Theorem \ref{thm:main}. The answer is no: more precisely, we construct the following counter example 

\begin{Ex} If $(K,d)$ is an infinite compact metric space and if $X=C(K)$ is the space of real continuous functions on K equiped with its usual norm, there exists a weakly compact subset $C$ of $X$ and a function $f$  weakly upper semi-continuous on $X$ such that $D(C,f)$ is not dense in $X$.
\end{Ex} 

Indeed, take $f(z)=(1-\|z\|)^{+}=\max(0,1-\|z\|)$ and consider a decreasing sequence $(U_n)_{n \geqslant  1}$ of open subsets of $K$ such that $\bigcap_{n \geqslant 1} U_n=\emptyset$ (fix $y \in K$ which is not an isolated point in $K$, then a possible choice is $U_n=\{x \in K \setminus \{y\}; d(x,y)<\frac{1}{n}\}$), let us also fix $t_n \in U_n$ and put
$$ x_n(t)=\frac{d(t,{U_n}^c)}{d(t,t_n)+d(t,{U_n}^c)} \; \; (t \in K, n \geqslant 1). $$
By construction of $U_n$, we have $\|x_n\|=1$ and $(x_n)_{n \geqslant 1}$ converges pointwise to $0$ which implies that $(x_n)_{n \geqslant 1}$ converges weakly to $0$ as easily seen using the Riesz representation theorem and the Lebesgue's dominated convergence theorem. Put
$$ C=\{(1-\frac{1}{n})x_n, n \geqslant 1\}=\{0\} \cup \{(1-\frac{1}{n})x_n, n \geqslant 2\} $$
which is weakly compact as the union of a convergent sequence and its limit. Note that $C$ is contained in $B_X$ and hence $f(z)=1-\|z\|$, we are left to find the supremum of the function $f_x$ ($x \in X$ fixed) defined for $z \in C$ by $f_x(z)=\|x-z\|+\|z\|$. We will show that for $x \in \overline{B}(\textbf{2},1)$ (where $\textbf{2}$ denotes the function identically equal to 2), $f_x$ never attains its supremum and as a consequence $D(C,f)$ is not dense. Since for $t \in K$, $x(t) \geqslant 1$, we get for $z \in C$
$$ \|x-z\|=\sup|x(t)-z(t)|=\sup(x(t)-z(t)) \leqslant \sup x(t)=\|x\| $$
and on the other hand $\|z\|<1$ gives $f_x(z)<\|x\|+1$. To finish, the last thing we have to see is that $\sup f_x \geqslant \|x\|+1$. Fix $t_0$ such that $\|x\|=|x(t_0)|$, then 
$$ \sup f_x \geqslant f_x((1-\frac{1}{n})x_n)) \geqslant |x(t_0)-(1-\frac{1}{n})x_n(t_0)|+(1-\frac{1}{n}). $$
The conclusion follows because $(x_n)_{n \geqslant 1}$ converges pointwise to $0$.

\begin{rem} 
- This last example also shows that the set of $x \in X$ such that $z \mapsto \|z-x\|+\|z\|$ attains its supremum on $C$ is not always dense in $X$. Recall that according to Corollary \ref{cor:main}, the set of $x \in X$ such that $z \mapsto \|z-x\|-\|z\|$ attains its supremum on $C$ is always dense in $X$. \\
- There exists spaces, for example $l^1(\mathbb{N})$, or more generally any Banach space with the Schur's property where we can't construct any counter examples of the above type because the weakly and strongly compact sets coincide. \\
- However if $C=B_X$ and $X$ is reflexive (to ensure the weak compactness of $C$). The set of $x$ such that $f_x$ (defined by $f_x(z)=\|x-z\|+\|z\|$) attains its supremum on $C$ is dense. To show this, we use the following proposition.
\end{rem}

\begin{prop} Let $f$ be a continuous convex function on $X$, $C$ a weakly compact subset of $X$ and $\varepsilon(C)$ the set of extremal points of $C$, then $\sup_{C} f=\sup _{\varepsilon(C)} f$.

\end{prop}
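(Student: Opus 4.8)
The plan is to deduce the statement from the Krein--Milman theorem applied to the weak-closed convex hull of $C$, exploiting the fact that a convex function is controlled by its values at extreme points. I would set $K=\overline{\mathrm{conv}}^{\,w}(C)$, the closure of the convex hull of $C$ for the weak topology. Since $C$ is weakly compact, the Krein--Smulian theorem guarantees that $K$ is again weakly compact, and it is convex by construction, so $K$ is a compact convex subset of the locally convex space $(X,\sigma(X,X^*))$ and the Krein--Milman theorem applies to it.

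First I would dispose of the easy inequality: because $\varepsilon(C)\subseteq C$, one has $\sup_{\varepsilon(C)}f\leqslant \sup_C f$ at once. The whole content lies in the reverse inequality, and the strategy is to establish the chain $\sup_C f\leqslant \sup_K f=\sup_{\varepsilon(K)}f\leqslant \sup_{\varepsilon(C)}f$, the first inequality being immediate from $C\subseteq K$.

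The middle equality $\sup_K f=\sup_{\varepsilon(K)}f$ is where convexity enters. By Krein--Milman, $K=\overline{\mathrm{conv}}^{\,w}(\varepsilon(K))$, and since a convex set has the same weak and norm closures (Mazur's theorem), every $z\in K$ is a norm-limit of finite convex combinations $\sum_i\lambda_i e_i$ with $e_i\in\varepsilon(K)$, $\lambda_i\geqslant 0$ and $\sum_i\lambda_i=1$. Convexity of $f$ then gives $f\bigl(\sum_i\lambda_i e_i\bigr)\leqslant\sum_i\lambda_i f(e_i)\leqslant\sup_{\varepsilon(K)}f$, and the continuity of $f$ passes this bound to the limit $z$. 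This yields $\sup_K f\leqslant\sup_{\varepsilon(K)}f$, the reverse being trivial.

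The last inequality $\sup_{\varepsilon(K)}f\leqslant\sup_{\varepsilon(C)}f$ rests on the inclusion $\varepsilon(K)\subseteq\varepsilon(C)$, which I expect to be the main obstacle, since it is the only place where the possible non-convexity of $C$ must be handled. Here I would invoke Milman's partial converse to Krein--Milman: as $K$ is the closed convex hull of the weakly compact, hence weakly closed, set $C$, all extreme points of $K$ lie in $C$. Finally, a point $z\in C$ that is extreme in the larger convex set $K\supseteq C$ is automatically extreme in $C$, because any representation $z=\tfrac12(a+b)$ with $a,b\in C$ also has $a,b\in K$, forcing $a=b=z$; thus $\varepsilon(K)\subseteq\varepsilon(C)$. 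Combining the three steps gives $\sup_C f=\sup_{\varepsilon(C)}f$.
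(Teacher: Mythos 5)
Your proof is correct, and it takes a genuinely different (and in one respect more careful) route than the paper's. The paper argues by contradiction through sublevel sets: it picks $t$ with $\sup_{\varepsilon(C)}f<t<\sup_C f$, notes that $C_0=\{f\leqslant t\}$ is closed and convex and contains $\varepsilon(C)$, and then invokes Krein--Milman to conclude $C\subset C_0$, a contradiction. You instead argue directly: you pass to $K=\overline{\mathrm{conv}}^{\,w}(C)$ (weakly compact by Krein--Smulian), control $f$ on finite convex combinations of points of $\varepsilon(K)$ by Jensen's inequality, and extend the bound to all of $K$ via Mazur's theorem and norm continuity. Both proofs hinge on Krein--Milman, but your treatment of the step $\varepsilon(K)\subseteq\varepsilon(C)$ via Milman's partial converse is exactly what is needed when $C$ is not convex: the identity $\overline{\mathrm{conv}}^{\,\|\cdot\|}(\varepsilon(C))=C$ as written in the paper cannot hold literally for non-convex $C$ (the left-hand side is convex), and the correct statement is the one you prove, namely that the extreme points of the closed convex hull lie in $C$ and are then extreme in $C$. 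What the paper's version buys in exchange is a slightly weaker hypothesis: its argument only uses that the sublevel sets $\{f\leqslant t\}$ are closed and convex, so it would go through for quasi-convex lower semicontinuous $f$, whereas your Jensen step genuinely requires convexity of $f$.
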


\begin{proof} We have obviously, $\sup_{\varepsilon(C)} f \leqslant \sup _{C} f$. Suppose the reverse inequality is false and introduce $t$ such that 
$$ \sup_{\varepsilon(C)} f < t < \sup _{C} f.$$
Then, we have $\varepsilon(C) \subset C_0:=\{f \leqslant t\}$. Since $f$ is continuous convex , $C_0$ is a closed convex set, the Krein-Milman's theorem says that $\overline{\textrm{conv}}^{\| . \|}(\varepsilon(C))=C$, hence $C \subset C_0$. Now, since  $\sup _{C} f>t$, one can find $x \in C$ such that $f(x)>t$ which contradicts $x \in C_0$.
\end{proof}

This implies the last remark, indeed $\varepsilon(C)$ is of course contained in the unit sphere. Using the previous fact two times, we see that
$$ \sup_{z \in C} f_x(z)=\sup_{z \in \varepsilon(C)} f_x(z)=1+\sup_{z \in \varepsilon(C)} \|x-z\|=1+\sup_{z \in C} \|x-z\| $$
which gives the conclusion with the main theorem (with the pertubation $f=0$).$
\linebreak$

\begin{rem}
To finish, we would like to mention that the map $f \mapsto D(C,f)$ has no good properties. Let us take  $X=\mathbb{R}$, $C=[0,1]$ and put for $z \in \mathbb{R}$, $f_k(z)=\frac{\textbf{1}_{\{0,1\}}(z)}{k}$ where $\textbf{1}_{\{0,1\}}$ denotes the characteristic function of the pair $\{0,1\}$ which is equal to 1 if $z=0$ or $z=1$ and $0$ otherwise. It is obvious that $(f_k)_{k \geqslant 1}$ converges uniformly to $0$ ($D(C,0)=X$) and yet, all the $D(C,f_k)$ are empty. 
\end{rem}
Indeed, let $x \in \mathbb{R}$ and suppose that $x \geqslant \frac{1}{2}$. For $z \in [0,1]$, $|x-z|$ is maximal when $z=0$ and is equal to $x$. Hence 
$$ \sup \{|x-z|-f_k(z), z \in [0,1]\} \leqslant x. $$
On the other hand, taking a sequence $(z_n) \subset ]0,1[$ converging to $0$, we get the reverse inequality. If we had a $z$ which attains the supremum, we should have 
$$ f_k(z)=|x-z|-x \leqslant x-x=0,$$
which implies that $z \in ]0,1[$. This gives us $|z-x|=x$ with $z \in ]0,1[$, which contradicts $|x-z|<x$. For $x \leqslant \frac{1}{2}$, we proceed the same way with the point $z=1$. \\ 

\textbf{Acknowledgments.} I would like to thank my advisor Robert Deville for his help during the elaboration of this paper as well as the referee for useful comments.

\small{
}

\end{document}